\newtheorem{theorem}{Theorem}[section]
\newtheorem{remark}{Remark}[section]
\newtheorem{application}{Application}[section]
\newtheorem{proposition}{Proposition}[section]
\numberwithin {equation}{section}
\newenvironment{proof}{\textbf{Proof.}} {\hfill $\Box$}
\begin{document}
\title{\LARGE\bf The class $B_p$ for weighted generalized Fourier transform inequalities}
\date{}
\maketitle
\begin{abstract}
In the present paper, we prove for the Dunkl transform which
generalizes the Fourier transform weighted inequalities when the
weights belong to the well-known class $B_p$. As application, we
obtain for power weights Pitt's inequality.
\end{abstract}
{\small\bf Keywords: }{\small Dunkl operators, Dunkl transform,
class $B_p$, Pitt's inequality.}\\
\noindent {\small \bf 2010 AMS Mathematics Subject Classification:}
{42B10, 46E30, 44A35.}
\section{Introduction }
\par A key tool in the study of special functions with
reflection symmetries are Dunkl operators. The basic ingredient in
the theory of these operators are root systems and finite reflection
groups, acting on $\mathbb{R}^d$. The Dunkl operators are commuting
differential-difference operators $T_i, 1 \leq i \leq d$ associated
to an arbitrary finite reflection group $W$ on $\mathbb{R}^d$
(see[7]). These operators attached with a root system $R$ can be
considered as perturbations of the usual partial derivatives by
reflection parts. These reflection parts are coupled by parameters,
which are given in terms of a non negative multiplicity function
$k$. Dunkl theory was further developed by several mathematicians
(see [6, 14]) and later was applied and generalized in different
ways by many authors (see [1, 2]). The Dunkl kernel $E_k$ has been
introduced by C.F. Dunkl in [8]. For a family of weight functions
$w_k$ invariant under a reflection group $W$, we use the Dunkl
kernel and the measure $w_k(x)dx$ to define the generalized Fourier
transform $\mathcal{F}_k$, called the Dunkl transform, which enjoys
properties similar to those of the classical Fourier transform. If
the parameter $k\equiv0$ then $w_k(x)=1$, so that $\mathcal{F}_k$
becomes the classical Fourier transform and the $T_i, 1 \leq i \leq
d$ reduce to the corresponding partial derivatives
$\frac{\partial}{\partial x_i}, 1 \leq i \leq d$. Therefore Dunkl
analysis can be viewed as a generalization of
classical Fourier analysis (see next section, Remark 2.1).   \\

Let $\mu$ a nonnegative locally integrable function on
$(0,+\infty)$. We say that $\mu\in B_{p}$, $1< p<+\infty$ if there
is a constant $b_{p}>0$ such that for all $s>0$
\begin{eqnarray}
\int_{s}^{+\infty}\frac{\mu(t)}{t^{p}}dt \leq
b_{p}\frac{1}{s^{p}}\int_{0}^{s}\mu(t)dt.
\end{eqnarray}
In the particular case when $\mu$ is non-increasing, one has $\mu
\in B_{p}$.\\

The weighted Hardy inequality [16] (see also [9, 13]) states that if
$\mu$ and $\vartheta$ are locally integrable
 weight functions on $(0,+\infty)$ and $1<p\leq q<+\infty$, then there is a constant $c>0$ such that for
all non-increasing, non-negative Lebesgue measurable function $f$ on
$(0,+\infty)$, the inequality
\begin{eqnarray}\Big(\int_{0}^{+\infty}\Big(\frac{1}{t}\int_{0}^{t}f(s)ds\Big)^{q}\mu(t)dt\Big)^{\frac{1}{q}}\leq
 c\, \Big(\int_{0}^{+\infty}(f(t))^{p}\vartheta(t)dt\Big)^{\frac{1}{p}}\end{eqnarray}
  is satisfied if and only if
\begin{eqnarray}\displaystyle\sup_{s>0}\Big(\int_{0}^{s}\mu(t)dt\Big)^{\frac{1}{q}}
\Big(\int_{0}^{s}(\vartheta(t))dt\Big)^{-\frac{1}{p}}
<+\infty.\end{eqnarray} and
\begin{eqnarray}\displaystyle\sup_{s>0}\Big(\int_{s}^{+\infty}\frac{\mu(t)}{t^q}dt\Big)^{\frac{1}{q}}
\Big(\int_{0}^{s}\Big(\frac{1}{t}\int_{0}^{t}\vartheta(l)&dl&\Big)^{-p'}\vartheta(t)dt\Big)^{\frac{1}{p'}}
<+\infty.\end{eqnarray} Hardy's result still remains to be an
important one as it is closely related to the Hardy-Littlewood
maximal functions in harmonic analysis [17].\\

The aim of this paper is to prove under the $B_p$ condition (1.1)
and using the weight characterization of the Hardy operator,
weighted Dunkl transform inequalities for general nonnegative
locally integrable functions $u$, $v$ on $\mathbb{R}^d$,
\begin{eqnarray*}\Big(\int_{\mathbb{R}^{d}}|\mathcal{F}_{k}(f)(x)|^{q}u(x)
d\nu_{k}(x)\Big)^{\frac{1}{q}} \leq
c\,\Big(\int_{\mathbb{R}^{d}}|f(x)|^{p}v(x)
d\nu_{k}(x)\Big)^{\frac{1}{p}},\end{eqnarray*} where $1<p\leq 2 \leq
q<+\infty$ and $f\in L^p_{k,v}(\mathbb{R}^d)$.
$L^p_{k,v}(\mathbb{R}^d)$ denote the space $L^{p}(\mathbb{R}^d, v(x)
d\nu_k(x))$ with $\nu_k$ the weighted measure associated to the
Dunkl operators defined by
\begin{eqnarray*}d\nu_k(x):=w_k(x)dx\quad
\mbox{where}\;\;w_k(x) = \prod_{\xi\in R_+} |\langle
\xi,x\rangle|^{2k(\xi)}, \quad x \in \mathbb{R}^d.\end{eqnarray*}
$R_+$ being a positive root system and $\langle .,.\rangle$ the
standard Euclidean scalar product on $\mathbb{R}^d$ (see next
section). As application, we make a study of power weights in this
context. This all leads to Pitt\'{}s inequality:\\ for $1<p\leq
2\leq q<+\infty$, $-(2\gamma+d)<\alpha<0$,
$0<\beta<(2\gamma+d)(p-1)$ and $f\in L^p_{k,v}(\mathbb{R}^d)$, one
has
\begin{eqnarray*}\Big(\int_{\mathbb{R}^{d}}|\mathcal{F}_{k}(f)(x)|^{q}\|x\|^{\alpha}
d\nu_{k}(x)\Big)^{\frac{1}{q}} \leq
c\,\Big(\int_{\mathbb{R}^{d}}|f(x)|^{p}\|x\|^{\beta}
d\nu_{k}(x)\Big)^{\frac{1}{p}},\end{eqnarray*} with the index
constraint
$\frac{1}{2\gamma+d}(\frac{\alpha}{q}+\frac{\beta}{p})=1-\frac{1}{p}-\frac{1}{q}$
where $\displaystyle\gamma = \sum_{\xi \in R_+} k(\xi)$. This extend
to the Dunkl analysis some results obtained for the classical
Fourier analysis in [4]. \\

The contents of this paper are as follows. \\In section 2, we
collect some basic definitions and results about harmonic analysis
associated with Dunkl operators .\\
The section 3 is devoted to the proofs of the weighted Dunkl
transform inequalities when the weights belong to the class $B_p$.
As application, we obtain for power weights Pitt\'{}s inequality.\\

Along this paper we use $c$ to denote a suitable positive constant
which is not necessarily the same in each occurrence and we write
for $x \in \mathbb{R}^d,$ $\|x\| = \sqrt{\langle x,x\rangle}$.
Furthermore, we denote by

$\bullet\quad \mathcal{E}(\mathbb{R}^d)$ the space of infinitely
differentiable functions on $\mathbb{R}^d$.

$\bullet\quad \mathcal{S}(\mathbb{R}^d)$ the Schwartz space of
functions in $\mathcal{E}( \mathbb{R}^d)$ which are rapidly
decreasing as well as their derivatives.

$\bullet\quad \mathcal{D}(\mathbb{R}^d)$ the subspace of
$\mathcal{E}(\mathbb{R}^d)$ of compactly supported functions.
\section{Preliminaries}
 $ $ In this section, we recall some notations and
results in Dunkl
theory and we refer for more details to the surveys [15].\\

Let $W$ be a finite reflection group on $\mathbb{R}^{d}$, associated
with a root system $R$. For $\alpha\in R$, we denote by
$\mathbb{H}_\alpha$ the hyperplane orthogonal to $\alpha$. For a
given $\beta\in\mathbb{R}^d\backslash\bigcup_{\alpha\in R}
\mathbb{H}_\alpha$, we fix a positive subsystem $R_+=\{\alpha\in R:
\langle \alpha,\beta\rangle>0\}$. We denote by $k$ a nonnegative
multiplicity function defined on $R$ with the property that $k$ is
$W$-invariant. We associate with $k$ the index
$$\gamma = \sum_{\xi \in R_+} k(\xi) \geq 0,$$
and a weighted measure $\nu_k$ given by
\begin{eqnarray*}d\nu_k(x):=w_k(x)dx\quad
\mbox{ where }\;\;w_k(x) = \prod_{\xi\in R_+} |\langle
\xi,x\rangle|^{2k(\xi)}, \quad x \in \mathbb{R}^d,\end{eqnarray*}

Further, we introduce the Mehta-type constant $c_k$ by
$$c_k = \left(\int_{\mathbb{R}^d} e^{- \frac{\|x\|^2}{2}}
w_k (x)dx\right)^{-1}.$$

For every $1 \leq p \leq + \infty$, we denote respectively by
$L^p_k(\mathbb{R}^d)$, $L^p_{k,u}(\mathbb{R}^d)$,
$L^p_{k,v}(\mathbb{R}^d)$ the spaces $L^{p}(\mathbb{R}^d,
d\nu_k(x)),$ $L^{p}(\mathbb{R}^d, u(x)d\nu_k(x)),$
$L^{p}(\mathbb{R}^d, v(x)d\nu_k(x))$ and $L^p_k(
\mathbb{R}^d)^{rad}$ the subspace of those $f \in L^p_k(
\mathbb{R}^d)$ that are radial. We use respectively $\|\
\;\|_{p,k}$\,, $\|\ \;\|_{p,k,u}$\,, $\|\ \;\|_{p,k,v}$
as a shorthand for
 $\|\ \;\|_{L^p_k( \mathbb{R}^d)}$, $\|\ \;\|_{L^p_{k,u}( \mathbb{R}^d)}$, $\|\ \;\|_{L^p_{k,v}( \mathbb{R}^d)}.$ \\

By using the homogeneity of degree $2\gamma$ of $w_k$, it is shown
in [14] that for a radial function $f$ in $L^1_k ( \mathbb{R}^d)$,
there exists a function $F$ on $[0, + \infty)$ such that $f(x) =
F(\|x\|)$, for all $x \in \mathbb{R}^d$. The function $F$ is
integrable with respect to the measure $r^{2\gamma+d-1}dr$ on $[0, +
\infty)$ and we have
 \begin{eqnarray} \int_{\mathbb{R}^d}  f(x)\,d\nu_k(x)&=&\int^{+\infty}_0
\Big( \int_{S^{d-1}}f(ry)w_k(ry)d\sigma(y)\Big)r^{d-1}dr\nonumber\\
&=&
 \int^{+\infty}_0
\Big( \int_{S^{d-1}}w_k(ry)d\sigma(y)\Big)
F(r)r^{d-1}dr\nonumber\\&= & d_k\int^{+ \infty}_0 F(r)
r^{2\gamma+d-1}dr,
\end{eqnarray}
  where $S^{d-1}$
is the unit sphere on $\mathbb{R}^d$ with the normalized surface
measure $d\sigma$  and \begin{eqnarray}d_k=\int_{S^{d-1}}w_k
(x)d\sigma(x) = \frac{c^{-1}_k}{2^{\gamma +\frac{d}{2} -1}
\Gamma(\gamma + \frac{d}{2})}\;.  \end{eqnarray}

The Dunkl operators $T_j\,,\ \ 1\leq j\leq d\,$, on $\mathbb{R}^d$
associated with the reflection group $W$ and the multiplicity
function $k$ are the first-order differential- difference operators
given by
$$T_jf(x)=\frac{\partial f}{\partial x_j}(x)+\sum_{\alpha\in R_+}k(\alpha)
\alpha_j\,\frac{f(x)-f(\rho_\alpha(x))}{\langle\alpha,x\rangle}\,,\quad
f\in\mathcal{E}(\mathbb{R}^d)\,,\quad x\in\mathbb{R}^d\,,$$ where
$\rho_\alpha$ is the reflection on the hyperplane
$\mathbb{H}_\alpha$ and $\alpha_j=\langle\alpha,e_j\rangle,$
$(e_1,\ldots,e_d)$ being the canonical basis of $\mathbb{R}^d$.
\begin{remark}In the case $k\equiv0$, the weighted function $w_k\equiv1$ and the measure $\nu_k$ associated to the
Dunkl operators coincide with the Lebesgue measure. The $T_j$ reduce
to the corresponding partial derivatives. Therefore Dunkl analysis
can be viewed as a generalization of classical Fourier
analysis.\end{remark}

For $y \in \mathbb{C}^d$, the system
$$\left\{\begin{array}{lll}T_ju(x,y)&=&y_j\,u(x,y),\qquad1\leq j\leq d\,,\\  &&\\
u(0,y)&=&1\,.\end{array}\right.$$ admits a unique analytic solution
on $\mathbb{R}^d$, denoted by $E_k(x,y)$ and called the Dunkl
kernel. This kernel has a unique holomorphic extension to
$\mathbb{C}^d \times \mathbb{C}^d $. We have for all $\lambda\in
\mathbb{C}$ and $z, z'\in \mathbb{C}^d,\;
 E_k(z,z') = E_k(z',z)$,  $E_k(\lambda z,z') = E_k(z,\lambda z')$ and for $x, y
\in \mathbb{R}^d,\;|E_k(x,iy)| \leq 1$.\\

The Dunkl transform $\mathcal{F}_k$ is defined for $f \in
\mathcal{D}( \mathbb{R}^d)$ by
$$\mathcal{F}_k(f)(x) =c_k\int_{\mathbb{R}^d}f(y) E_k(-ix, y)d\nu_k(y),\quad
x \in \mathbb{R}^d.$$  We list some known properties of this
transform:
\begin{itemize}
\item[i)] The Dunkl transform of a function $f
\in L^1_k( \mathbb{R}^d)$ has the following basic property
\begin{eqnarray*}\| \mathcal{F}_k(f)\|_{\infty,k} \leq
 \|f\|_{ 1,k}\;. \end{eqnarray*}
\item[ii)] The Dunkl transform is an automorphism on the Schwartz space $\mathcal{S}(\mathbb{R}^d)$.
\item[iii)] When both $f$ and $\mathcal{F}_k(f)$ are in $L^1_k( \mathbb{R}^d)$,
 we have the inversion formula \begin{eqnarray*} f(x) =   \int_{\mathbb{R}^d}\mathcal{F}_k(f)(y) E_k( ix, y)d\nu_k(y),\quad
x \in \mathbb{R}^d.\end{eqnarray*}
\item[iv)] (Plancherel's theorem) The Dunkl transform on $\mathcal{S}(\mathbb{R}^d)$
 extends uniquely to an isometric automorphism on
$L^2_k(\mathbb{R}^d)$.
\end{itemize} Since the Dunkl transform $\mathcal{F}_k(f)$ is of strong-type $(1,\infty)$ and
$(2,2)$, then by interpolation, we get for $f \in
L^p_k(\mathbb{R}^d)$ with $1\leq p\leq 2$ and $p'$ such that
$\frac{1}{p}+\frac{1}{p'}=1$, the Hausdorff-Young inequality
\begin{eqnarray*}
\|\mathcal{F}_k(f)\|_{p',k}\leq c\,\|f\|_{p,k}.
\end{eqnarray*}
The Dunkl transform of a function in $L^1_k( \mathbb{R}^d)^{rad}$
 is also radial. More precisely, according to ([14], proposition
2.4), we have for  $x\in\mathbb{R}$, the following results:
\begin{eqnarray*}\int_{S^{d-1}}E_k(ix,y)w_k(y)d\sigma(y) =
d_k\, j_{\gamma + \frac{d}{2}-1}(\|x\|), \end{eqnarray*}
 and for $f$ be in $L^1_k(\mathbb{R}^d)^{rad}\;,$
 \begin{eqnarray}\mathcal{F}_k(f)(x) &=&\int^{+\infty}_0 \Big( \int_{S^{d-1}}E_k(-irx,
y)w_k(y)d\sigma(y)\Big) F(r)r^{2\gamma+d-1}dr\nonumber\\&=&
d_k\int^{+\infty}_0 j_{\gamma + \frac{d}{2}-1}(r\|x\|)
F(r)r^{2\gamma+d-1}dr,\end{eqnarray} where $F$ is the function
defined on $[ 0, + \infty)$ by $F(\|x\|) = f(x)$ and $j_{\gamma +
\frac{d}{2}-1}$  the normalized Bessel function of the first kind
and order $\gamma + \frac{d}{2}-1$ given by
\begin{eqnarray*}
j_{\gamma+\frac{d}{2}-1}(\lambda x) =\left
\{\begin{array}{ll}2^{\gamma+\frac{d}{2}-1}\Gamma(\gamma+\frac{d}{2})\frac{J_{\gamma+\frac{d}{2}-1}(\lambda
x)}{(\lambda x)^{\gamma+\frac{d}{2}-1}}&
\mbox{if}\; \lambda x\neq0,\\
1& \mbox{if}\;\lambda x=0\,,\end{array} \right.
\end{eqnarray*}
$\lambda\in\mathbb{C}$. Here $J_{\gamma+\frac{d}{2}-1}$ is the
Bessel function of first kind,
\begin{eqnarray}
J_{\gamma+\frac{d}{2}-1}(t)&=&\frac{(\frac{t}{2})^{\gamma+\frac{d}{2}-1}}{\sqrt{\pi}\Gamma(\gamma+\frac{d}{2}-\frac{1}{2})}
\int_{0}^{\pi}\cos(t\cos\theta)(\sin\theta)^{2\gamma+d-2}d\theta\nonumber
\\&=&C_{\gamma}t^{\gamma+\frac{d}{2}-1}
\int_{0}^{\frac{\pi}{2}}\cos(t\cos\theta)(\sin\theta)^{2\gamma+d-2}d\theta,
\end{eqnarray}
where $C_{\gamma}=\frac{1}{\sqrt{\pi}2^{\gamma+\frac{d}{2}-2}\Gamma(\gamma+\frac{d}{2}-\frac{1}{2})}$.\\
\section{Weighted Dunkl transform inequalities}
In this section, we denote by $p'$ and $q'$ respectively the
conjugates of $p$ and $q$ for $1<p\leq q<+\infty$. The proof
requires a useful well-known facts which we shall now state in the
following.
\begin{proposition} (see [16])
Let $1<p<+\infty$ and $v $ be a nonnegative function on
$(0,+\infty)$. The following are equivalent:
\begin{itemize}
  \item [i)] $v\in B_{p}$.
  \item [ii)] There is a positive constant $c$ such that for all $s>0$,\begin{eqnarray}
\displaystyle
\Big(\int_{0}^{s}v(t)dt\Big)^{\frac{1}{p}}\Big(\int_{0}^{s}\Big(\frac{1}{t}\int_{0}^{t}v(l)dl\Big)^{1-p'}dt\Big)^{\frac{1}{p'}}
&\leq & c\,s.
   \end{eqnarray}
  \end{itemize}
\end{proposition}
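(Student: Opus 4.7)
Write $V(s) := \int_0^s v(t)\,dt$ and $\phi(s) := V(s)/s$, and set $J(s) := \int_0^s \phi(t)^{1-p'}\,dt$. Then (i) reads $\int_s^{+\infty} v(t)\,t^{-p}\,dt \leq b_p\,V(s)/s^p$ and (ii) is equivalent to $V(s)\,J(s)^{p-1} \leq c^p\,s^p$. Since $V$ is non-decreasing, the inequality $V(t) \leq V(s)$ for $t \leq s$ automatically gives the matching lower bound $J(s) \geq s^{p'}/(p'\,V(s)^{p'-1})$, so (ii) is a matching upper bound at every scale and the two conditions have the same homogeneity.

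For the implication (i)$\Rightarrow$(ii), the natural starting point is integration by parts in $J$, differentiating $V(t)^{1-p'}$ and integrating $t^{p'-1}$, which yields
\[
J(s) = \frac{s^{p'}}{p'\,V(s)^{p'-1}} + \frac{p'-1}{p'}\int_0^s \Big(\frac{t}{V(t)}\Big)^{p'} v(t)\,dt.
\]
The first term is already the target bound for (ii); the task is to control the residual by a constant multiple of it. I would first extract from $B_p$ the doubling estimate $V(2s) \leq (1 + 2^p b_p)\,V(s)$ (by using $v(t) \leq t^p\cdot v(t)/t^p$ on $(s,2s)$ and invoking the $B_p$ tail bound), then split $(0,s)$ into dyadic annuli $(2^{-j-1}s,2^{-j}s]$, bound $t/V(t)$ on each annulus by $2^{-j}s/V(2^{-j-1}s)$, estimate $\int v$ on each by $V(2^{-j}s)$, and sum the resulting geometric-like series.

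For the implication (ii)$\Rightarrow$(i), the strategy is dual. Fix $s > 0$ and decompose $(s,+\infty) = \bigcup_j [2^j s, 2^{j+1}s)$. On each ring, crudely bound $\int v(t)\,t^{-p}\,dt \leq (2^j s)^{-p}\,(V(2^{j+1}s)-V(2^j s))$, and then apply (ii) at the scale $r = 2^{j+1}s$; combined with the automatic lower bound on $J$, this forces the doubling of $V$ and produces the geometric decay of $V(2^{j+1}s)/(2^j s)^p$ relative to $V(s)/s^p$ needed to sum to a constant multiple of $V(s)/s^p$.

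The main obstacle is the residual estimate in the first direction. The raw doubling constant $D = 1 + 2^p b_p$ coming straight from $B_p$ only yields the polynomial growth $V(t) \lesssim t^{\log_2 D}$, and if $\log_2 D \geq p$ this growth is too fast for the naive dyadic bound on the residual to converge. One must therefore exploit $B_p$ more sharply than through its doubling consequence alone --- either by telescoping the $B_p$ inequality at each dyadic scale (so that the effective local constant is strictly smaller than $D$), or by invoking the self-improving property of the class $B_p$ (namely, that $v \in B_p$ implies $v \in B_{p-\varepsilon}$ for some $\varepsilon > 0$) to gain the extra integrability needed to close the loop.
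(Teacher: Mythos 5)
The paper does not actually prove this proposition: it is quoted verbatim from Sawyer's paper [19], so there is no in-paper argument to measure you against. Judged on its own, your outline has the right skeleton (the reformulation $V(s)J(s)^{p-1}\le c^p s^p$, the integration by parts producing $J(s)=\frac{s^{p'}}{p'V(s)^{p'-1}}+\frac{p'-1}{p'}\int_0^s\bigl(\frac{t}{V(t)}\bigr)^{p'}v(t)\,dt$, the doubling bound $V(2s)\le(1+2^pb_p)V(s)$, and the dyadic analysis are all correct), but neither implication is actually closed. In the direction (i)$\Rightarrow$(ii) you honestly flag the obstruction; of your two proposed repairs, the self-improvement $B_p\Rightarrow B_{p-\varepsilon}$ does work (it yields $V(\lambda s)\le C\lambda^{p-\varepsilon}V(s)$ for $\lambda\ge1$, which makes $\sum_j 2^{-jp'}2^{j(p-\varepsilon)(p'-1)}$ converge since $p-\varepsilon<p=p'/(p'-1)$), but that self-improvement is itself a theorem of Ari\~no--Muckenhoupt type of at least the same depth as the proposition, so this is a reduction rather than a proof. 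The cleaner self-contained route is to set $g(s)=s^{-p}V(s)+\int_s^{+\infty}v(t)t^{-p}\,dt$, note $g'(s)=-ps^{-p-1}V(s)\le-\frac{p}{1+b_p}\,\frac{g(s)}{s}$ by the $B_p$ bound, and integrate to get $V(\lambda s)\le(1+b_p)\lambda^{pb_p/(1+b_p)}V(s)$; the exponent is strictly below $p$ and your dyadic sum then closes.

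The genuine error is in (ii)$\Rightarrow$(i). The mechanism you describe --- applying (ii) at scale $r=2^{j+1}s$ ``combined with the automatic lower bound on $J$'' --- does not produce geometric decay. The single-scale lower bound $J(r)\ge\frac{s^{p'}}{p'V(s)^{p'-1}}$ (from the piece of $J(r)$ over $(0,s)$) fed into $V(r)J(r)^{p-1}\le c^pr^p$ gives only $V(r)\le (p')^{p-1}c^p(r/s)^pV(s)$: the exponent is exactly $p$, so each term $(2^js)^{-p}\bigl(V(2^{j+1}s)-V(2^js)\bigr)$ is merely bounded by a constant times $V(s)/s^p$ and the series over $j$ diverges. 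To close this you must use the full multi-scale lower bound $J(2^Ns)\ge c\sum_{j=0}^{N}b_j$ with $b_j=(2^js)^{p'}V(2^js)^{1-p'}$, so that (ii) becomes $\sum_{j\le N}b_j\le C\,b_N$; iterating this ``quasi-geometric'' inequality forces $b_N\ge c\,\theta^{-(N-M)}b_M$ with $\theta=1-1/C<1$, hence $V(2^Ns)\le C\bigl(2^p\theta^{p-1}\bigr)^{N}V(s)$ with growth exponent strictly less than $p$, and only then does the tail sum $\int_s^{+\infty}v(t)t^{-p}\,dt$ collapse to $C\,V(s)/s^p$. That iteration is the missing idea, and without it the backward implication as written fails.
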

\begin{remark}$ $
\begin{itemize}
\item[1/] (see [5]) (Hardy's Lemma) Let $f$ and $g$ be non-negative Lebesgue measurable functions on
$(0, +\infty)$, and assume
$$\int_{0}^{t}f(s)ds\leq \int_{0}^{t}g(s)ds$$
for all $t\geq0$. If $\varphi$ is a non-negative and decreasing
function on $(0, +\infty)$, then
\begin{eqnarray}\int_{0}^{+\infty}f(s)\varphi(s)ds\leq \int_{0}^{+\infty}g(s)\varphi(s)ds.\end{eqnarray}
\item[2/] Let $f$ be a measurable function on $\mathbb{R}^{d}$. The
distribution function $D_f$ of $f$ is defined for all $s\geq0$ by
$$D_{f}(s)=\nu_k(\{x\in\mathbb{R}^{d}\,:\; |f(x)|>s\}).$$ The decreasing
rearrangement of $f$ is the function $f^*$ given for all $t\geq0$ by
$$f^{*}(t)=inf\{s\geq0 \,:\; D_{f}(s)\leq t\}.$$ We have the following
results: \\ i) Let $f\in L^p_{k}(\mathbb{R}^d)$, $1\leq p<+\infty$
then
\begin{eqnarray}\int_{\mathbb{R}^{d}}|f(x)|^{p}
d\nu_{k}(x)=p\int_{0}^{+\infty}s^{p-1}D_{f}(s)ds=\int_{0}^{+\infty}(f^{*}(t))^{p}dt.\end{eqnarray}\\
ii) (see [12], Theorems 4.6 and 4.7) Let $q\geq2$, then there exists
a constant $c>0$ such that, for all $f\in
L_{k}^{1}(\mathbb{R}^{d})+L_{k}^{2}(\mathbb{R}^{d})$ and $\,s\geq0$,
\begin{eqnarray}\int_{0}^{s}(\mathcal{F}_{k}(f)^{*}(t))^{q}dt\leq c
\int_{0}^{s}\Big(\int_{0}^{\frac{1}{t}}f^{*}(y)dy\Big)^{q}dt.\end{eqnarray}
iii) (see [5, 10, 11]) (Hardy-Littlewood rearrangement inequality)\\
Let $f$ and $\vartheta$ be non negative measurable functions on
$\mathbb{R}^{d}$, then
\begin{eqnarray}\int_{\mathbb{R}^{d}}f(x)\vartheta(x)d\nu_{k}(x)\leq\int_{0}^{+\infty}f^{*}(t)\vartheta^{*}(t)dt\end{eqnarray}
and
\begin{eqnarray}\int_{0}^{+\infty}f^{*}(t)
 \Big[\Big(\frac{1}{\vartheta}\Big)^{*}(t)\Big]^{-1} dt\leq\int_{\mathbb{R}^{d}}f(x)\vartheta(x)d\nu_{k}(x).\end{eqnarray}\end{itemize}
\end{remark}
Now, we begin with the proof of the following proposition which
gives a necessary condition.
\begin{proposition}
Let $u$, $v$ be non-negative $\nu_k$-locally integrable functions on
$\mathbb{R}^d$ and $1<p\leq 2\leq q<+\infty$. If there exists a
constant $c>0$ such that for all $f\in L_{k}^{p}(\mathbb{R}^d)$,
\begin{eqnarray}
\Big(\int_{0}^{+\infty}\big((\mathcal{F}_{k}(f))^{*}(t)\big)^{q}u^*(t)dt\Big)^{\frac{1}{q}}
\leq c\,
\Big(\int_{0}^{+\infty}\big(f^{*}(t)\big)^{p}\Big[\Big(\frac{1}{v}\Big)^{*}(t)\Big]^{-1}dt\Big)^{\frac{1}{p}},\nonumber\\&
&
  \end{eqnarray} then it is necessary that \begin{eqnarray} \displaystyle
\sup_{s>0}s\Big(\int_{0}^{\frac{1}{s}}u^*(t)dt\Big)^{\frac{1}{q}}\Big(\int_{0}^{s}\Big[\Big(\frac{1}{v}\Big)^{*}(t)\Big]^{-1}dt\Big)^{\frac{-1}{p}}<
+\infty.
  \end{eqnarray}
\end{proposition}
\begin{proof}
Put for any fixed $r>0$,
$$R=\displaystyle\Big(r\,\frac{\nu_{k}(B(0,1))}{1+(\nu_{k}(B(0,1)))^{2}}\Big)^{\frac{1}{2\gamma+d}},
$$ and take $f=\chi_{(0,R)}$ in (3.7), where $\chi_{(0,R)}$ is the
characteristic function of the interval $(0,R)$. For $s\geq0$ and by
(2.1) and (2.2), the distribution function of $f$ is
\begin{eqnarray*}
D_{f}(s)=\nu_k(\{x\in\mathbb{R}^{d}\,:\; \chi_{(0,R)}(\|x\|)>s\})
&=&\frac{d_k}{2\gamma+d}R^{2\gamma+d}\chi_{(0,1)}(s)
\\&=&\nu_k(B(0,1))R^{2\gamma+d}\chi_{(0,1)}(s)
\\&=& r'\chi_{(0,1)}(s),
\end{eqnarray*}
where \begin{eqnarray}r'= \nu_k(B(0,1))R^{2\gamma+d}=
r\,\frac{(\nu_{k}(B(0,1)))^{2}}{1+(\nu_{k}(B(0,1)))^{2}}.\end{eqnarray}
This yields for $t\geq0$,
\begin{eqnarray*}
f^{*}(t)&=&inf\{s\geq0 \,:\; D_{f}(s)\leq t\}
\\&=& \chi_{(0,r')}(t).
\end{eqnarray*}
Observe that $r'< r$, hence we have
\begin{eqnarray}
\Big(\int_{0}^{+\infty}\Big((\mathcal{F}_k(f))^*(t)\Big)^{q}u^*(t)dt\Big)^{\frac{1}{q}}&\leq&
c\,
\Big(\int_{0}^{r'}\Big[\Big(\frac{1}{v}\Big)^{*}(t)\Big]^{-1}dt\Big)^{\frac{1}{p}}\nonumber\\&\leq&
c\,
\Big(\int_{0}^{r}\Big[\Big(\frac{1}{v}\Big)^{*}(t)\Big]^{-1}dt\Big)^{\frac{1}{p}}.
\end{eqnarray}
According to (2.3), for $x\in\mathbb{R}^{d}$, we can assert that
\begin{eqnarray}
\mathcal{F}_k(f)(x) &=&c_{k}^{-1}
\int_{0}^{R}j_{\gamma+\frac{d}{2}-1}(\|x\|
t)\frac{t^{2\gamma+d-1}}{2^{\gamma+\frac{d}{2}-1}\Gamma(\gamma+\frac{d}{2})}dt\nonumber
\\&=&c_{k}^{-1}\|x\|^{\frac{2-2\gamma-d}{2}}\int_{0}^{R}J_{\gamma+\frac{d}{2}-1}(\|x\|
t)t^{\frac{2\gamma+d}{2}}dt.
\end{eqnarray}
Since $\cos(t\|x\|\cos\theta)\geq\cos1>\frac{1}{2}$, for $t\in(0,
R)$, $\|x\|\in(0, \frac{1}{R})$ and $\theta\in(0, \frac{\pi}{2})$,
then we obtain from (2.4), the estimate
\begin{eqnarray*}
J_{\gamma+\frac{d}{2}-1}(\|x\| t)&>&\frac{1}{2}\,C_{\gamma}\,(\|x\|
t)^{\gamma+\frac{d}{2}-1}
\int_{0}^{\frac{\pi}{2}}(\sin\theta)^{2\gamma+d-2}d\theta\\&=&\frac{1}{2}\,C_{\gamma}\,(\|x\|
t)^{\gamma+\frac{d}{2}-1}
\frac{\sqrt{\pi}\Gamma(\gamma+\frac{d}{2}-\frac{1}{2})}{2\Gamma(\gamma+\frac{d}{2})}\\&=&
\frac{(\|x\|
t)^{\frac{2\gamma+d-2}{2}}}{2^{\frac{2\gamma+d}{2}}\Gamma(\frac{2\gamma+d}{2})},
\end{eqnarray*}
which gives by (2.1), (2.2), (3.9), (3.11) and for $\|x\|\in(0,
\frac{1}{R})$
\begin{eqnarray}
\mathcal{F}_k(f)(x)&>&c_{k}^{-1}\|x\|^{\frac{2-2\gamma-d}{2}}\int_{0}^{R}
\frac{(\|x\|
t)^{\frac{2\gamma+d-2}{2}}}{2^{\frac{2\gamma+d}{2}}\Gamma(\frac{2\gamma+d}{2})}t^{\frac{2\gamma+d}{2}}dt
\nonumber\\&=&\frac{c_{k}^{-1}}{2^{\frac{2\gamma+d}{2}}\Gamma(\frac{2\gamma+d}{2})}\int_{0}^{R}t^{2\gamma+d-1}dt
\; =\;\frac{r'}{2}\;.
\end{eqnarray}
By the fact that
$$\{t\in(0,\frac{1}{r}):(\mathcal{F}_k(f))^*(t)>s\}=\{t\in(0,\frac{1}{r}):D_{\mathcal{F}_k(f)}(s)>t\},$$
 we
have from
 (3.3) \\$\displaystyle\Big(\int_{0}^{+\infty}\Big((\mathcal{F}_k(f))^*(t)\Big)^{q}u^*(t)dt\Big)^{\frac{1}{q}}$
\begin{eqnarray*}
&\geq&
\Big(\int_{0}^{\frac{1}{r}}\Big((\mathcal{F}_k(f))^*(t)\Big)^{q}u^*(t)dt\Big)^{\frac{1}{q}}\\&=&
\Big(q\int_{0}^{+\infty}s^{q-1}\Big(\int_{\{t\in(0,\frac{1}{r}),\;(\mathcal{F}_k(f))^*(t)>s\}}u^*(t)dt\Big)ds\Big)^{\frac{1}{q}}
\\&=&
\Big(q\int_{0}^{+\infty}s^{q-1}\Big(\int_{0}^{\min(D_{\mathcal{F}_k(f)}(s),\frac{1}{r})}u^*(t)dt\Big)ds\Big)^{\frac{1}{q}}.
\end{eqnarray*}
If $s<\frac{r'}{2}$, then by
 (3.12)\\ $ B(0,
\frac{1}{R})\subseteq\{x\in\mathbb{R}^{d}\,:\;
|\mathcal{F}_k(f)(x)|>\frac{r'}{2}\}
\subseteq\{x\in\mathbb{R}^{d}\,:\; |\mathcal{F}_k(f)(x)|>s\}, $\\
thus using (2.1) and (2.2), we have
\begin{eqnarray*}
D_{\mathcal{F}_k(f)}(s)&=& \int_{\{x\in\mathbb{R}^{d}\,:\;
|\mathcal{F}_k(f)(x)|>s\}}w_k(x) \,dx
\\&\geq&d_{k}\int_{0}^{\frac{1}{R}}\rho^{2\gamma+d-1}d\rho \\&=&
\frac{1}{r}\Big(1+(\nu_{k}(B(0,1)))^{2}\Big)>\frac{1}{r}\;,
\end{eqnarray*}
wich gives that
\begin{eqnarray*}
\Big(\int_{0}^{+\infty}\Big((\mathcal{F}_k(f))^*(t)\Big)^{q}u^*(t)dt\Big)^{\frac{1}{q}}&\geq&
\Big(q\int_{0}^{\frac{r'}{2}}s^{q-1}\Big(\int_{0}^{\frac{1}{r}}u^*(t)dt\Big)ds\Big)^{\frac{1}{q}}\\&=&
\Big(q\int_{0}^{\frac{r'}{2}}s^{q-1}ds\Big)^{\frac{1}{q}}\Big(\int_{0}^{\frac{1}{r}}u^*(t)dt\Big)^{\frac{1}{q}}\\&=&
\frac{r'}{2}\Big(\int_{0}^{\frac{1}{r}}u^*(t)dt\Big)^{\frac{1}{q}}.
\end{eqnarray*}According to (3.9) and (3.10), we deduce that\\\\$\displaystyle r\Big(\int_{0}^{\frac{1}{r}}u^*(t)dt\Big)^{\frac{1}{q}}
\Big(\int_{0}^{r}\Big[\Big(\frac{1}{v}\Big)^{*}(t)\Big]^{-1}dt\Big)^{-\frac{1}{p}}$
\begin{eqnarray*}
\qquad\qquad\leq\;
c\,\Big(\int_{0}^{+\infty}\Big((\mathcal{F}_k(f))^*(t)\Big)^{q}u^*(t)dt\Big)^{\frac{1}{q}}\Big(\int_{0}^{r}
\Big[\Big(\frac{1}{v}\Big)^{*}(t)\Big]^{-1}dt\Big)^{-\frac{1}{p}}
\leq c,
\end{eqnarray*}which gives (3.8). This completes the proof.
\end{proof}
\begin{theorem}
Let $u$, $v$ be non-negative $\nu_k$-locally integrable functions on
$\mathbb{R}^d$ and $1<p\leq 2\leq q<+\infty$. Assume $
\displaystyle\frac{1}{\Big(\frac{1}{v}\Big)^{*}} \in B_{p}$ and
\begin{eqnarray} \displaystyle
\sup_{s>0}s\Big(\int_{0}^{\frac{1}{s}}u^*(t)dt\Big)^{\frac{1}{q}}\Big(\int_{0}^{s}\Big[\Big(\frac{1}{v}\Big)^{*}(t)\Big]^{-1}dt\Big)^{\frac{-1}{p}}<
+\infty,
  \end{eqnarray} then
there exists a constant $c>0$ such that for all $f\in
L_{k}^{p}(\mathbb{R}^d)$, we have
\begin{eqnarray}
\Big(\int_{\mathbb{R}^{d}}|\mathcal{F}_{k}(f)(x)|^{q}u(x)
d\nu_{k}(x)\Big)^{\frac{1}{q}} \leq
c\,\Big(\int_{\mathbb{R}^{d}}|f(x)|^{p}v(x)
d\nu_{k}(x)\Big)^{\frac{1}{p}}.
\end{eqnarray}
\end{theorem}
\begin{proof} In order to establish this result, we need to show that
\begin{eqnarray}
\Big(\int_{0}^{+\infty}\big((\mathcal{F}_{k}(f))^{*}(t)\big)^{q}u^*(t)dt\Big)^{\frac{1}{q}}
\leq c\,
\Big(\int_{0}^{+\infty}\big(f^{*}(t)\big)^{p}\Big[\Big(\frac{1}{v}\Big)^{*}(t)\Big]^{-1}dt\Big)^{\frac{1}{p}}.\nonumber\\&
&
  \end{eqnarray} Take $f\in L_{k}^{p}(\mathbb{R}^d)$, then using (3.2) and
(3.4), we obtain
\begin{eqnarray*}
\Big(\int_{0}^{+\infty}\Big((\mathcal{F}_k(f))^*(t)\Big)^{q}u^*(t)dt\Big)^{\frac{1}{q}}\leq
c\, \Big(\int_{0}^{+\infty}\Big(\int_{0}^{\frac{1}{t}}f^*(s)
ds\Big)^{q}u^*(t)dt\Big)^{\frac{1}{q}}.
\end{eqnarray*}
If we make the change of variable $t=\frac{1}{s}$ on the right side,
we get
\begin{eqnarray*}
\Big(\int_{0}^{+\infty}\Big((\mathcal{F}_k(f))^*(t)\Big)^{q}u^*(t)dt\Big)^{\frac{1}{q}}\leq
c\, \Big(\int_{0}^{+\infty}\Big(\frac{1}{s}\int_{0}^{s}f^*(t)
dt\Big)^{q}\frac{u^*(\frac{1}{s})}{s^{2-q}}ds\Big)^{\frac{1}{q}},
\end{eqnarray*}
which gives from (1.2), (1.3) and (1.4), that the inequality (3.15)
is satisfied if and only if
\begin{eqnarray*}
\displaystyle\sup_{s>0}
\Big(\int_{0}^{s}\frac{u^*(\frac{1}{t})}{t^{2-q}}dt\Big)^{\frac{1}{q}}
\Big(\int_{0}^{s}\Big[\Big(\frac{1}{v}\Big)^{*}(t)\Big]^{-1}dt\Big)^{-\frac{1}{p}}
<+\infty
\end{eqnarray*}
and
\begin{eqnarray*}
\sup_{s>0}
\Big(\int_{0}^{+\infty}\frac{u^*(\frac{1}{t})}{t^{2}}dt\Big)^{\frac{1}{q}}
\Big(\int_{0}^{s}\Big(\frac{1}{t}\int_{0}^{t}\Big[\Big(\frac{1}{v}\Big)^{*}(l)\Big]^{-1}
dl\Big)^{-p'}\Big[\Big(\frac{1}{v}\Big)^{*}(t)\Big]^{-1}dt\Big)^{\frac{1}{p'}}
 <+\infty.
\end{eqnarray*}
In order to complete the proof, we must verify that (3.13) implies
these two conditions between the weights $u^*$ and $
\displaystyle\frac{1}{\Big(\frac{1}{v}\Big)^{*}}$. This follows
closely the argumentations of [4]. More precisely, since $u^*$ is
non-increasing, then\\ $u^*\in B_{q}$ and by (1.1), it yields
\begin{eqnarray*}
\int_{0}^{s}u^*(\frac{1}{t}){t^{q-2}}dt=\int_{\frac{1}{s}}^{+\infty}\frac{u^*(t)}{t^{q}}dt\leq
b_{q}s^q\int_{0}^{\frac{1}{s}}u^*(t)dt.
\end{eqnarray*}
Hence by (3.13), we get\\\\
$\displaystyle\Big(\int_{0}^{s}u^*(\frac{1}{t}){t^{q-2}}dt\Big)^{\frac{1}{q}}
\Big(\int_{0}^{s}\Big[\Big(\frac{1}{v}\Big)^{*}(t)\Big]^{-1}dt\Big)^{-\frac{1}{p}}$
\begin{eqnarray*}
\qquad\qquad\leq \;
b_{q}^{\frac{1}{q}}s\Big(\int_{0}^{\frac{1}{s}}u^*(t)dt\Big)^{\frac{1}{q}}
\Big(\int_{0}^{s}\Big[\Big(\frac{1}{v}\Big)^{*}(t)\Big]^{-1}dt\Big)^{-\frac{1}{p}}
<+\infty,
\end{eqnarray*}
and so we obtain the first condition.
\\ To show that the second condition is satisfied, observe that by means of a change of
variable, we have
\begin{eqnarray}
\Big(\int_{s}^{+\infty}\frac{u^*(\frac{1}{t})}{t^{2}}dt\Big)^{\frac{1}{q}}
= \Big(\int_{0}^{\frac{1}{s}}u^*(t)dt\Big)^{\frac{1}{q}}.
\end{eqnarray}
Now, define the function $G$ by $$\displaystyle
G(s)=\Big(\int_{0}^{s}\Big(\frac{1}{t}\int_{0}^{t}\Big[\Big(\frac{1}{v}\Big)^{*}(l)\Big]^{-1}dl
\Big)^{-p'}\Big[\Big(\frac{1}{v}\Big)^{*}(t)\Big]^{-1}dt\Big)^{\frac{1}{p'}},
$$ then by integration by parts, we get
\begin{eqnarray*}
G(s)&=&\Big[p'G(s)^{p'}+s^{p'}\Big(\int_{0}^{s}\Big[\Big(\frac{1}{v}\Big)^{*}(t)\Big]^{-1}dt\Big)^{1-p'}\\&&-
p'\int_{0}^{s}\Big(\frac{1}{t}\int_{0}^{t}\Big[\Big(\frac{1}{v}\Big)^{*}(l)\Big]^{-1}dl\Big)^{1-p'}dt\Big]^{\frac{1}{p'}},
\end{eqnarray*}
which implies
\begin{eqnarray*}
(p'-1)G(s)^{p'}  \leq
p'\int_{0}^{s}\Big(\frac{1}{t}\int_{0}^{t}\Big[\Big(\frac{1}{v}\Big)^{*}(l)\Big]^{-1}dl\Big)^{1-p'}dt,
\end{eqnarray*}
and so
\begin{eqnarray*}
G(s)\leq\Big(\frac{p'}{p'-1}\int_{0}^{s}\Big(\frac{1}{t}\int_{0}^{t}\Big[\Big(\frac{1}{v}\Big)^{*}(l)\Big]^{-1}dl\Big)^{1-p'}dt\Big)^{\frac{1}{p'}}.
\end{eqnarray*}
Since $ \displaystyle\frac{1}{\Big(\frac{1}{v}\Big)^{*}} \in B_{p}$,
we can invoke (3.1) and we obtain
\begin{eqnarray*} \Big(\int_{0}^{s}\Big(\frac{1}{t}\int_{0}^{t}\Big[\Big(\frac{1}{v}\Big)^{*}(l)\Big]^{-1}dl
\Big)^{-p'}\Big[\Big(\frac{1}{v}\Big)^{*}(t)\Big]^{-1}dt\Big)^{\frac{1}{p'}}
\leq  c\,s \Big(\int_{0}^{s}
\Big[\Big(\frac{1}{v}\Big)^{*}(t)\Big]^{-1}dt\Big)^{\frac{-1}{p}}.\end{eqnarray*}
Combining this inequality and (3.16), we deduce (3.15).\\
Note that $(|f|^{p})^{*}=(f^{*})^{p}$ and
$(|\mathcal{F}_{k}(f)|^{q})^{*}=((\mathcal{F}_{k}(f))^{*})^{q}$,
then applying (3.5) and (3.6)
 for the inequality (3.15), we obtain (3.14). This completes the proof.
\end{proof}
\begin{application} (Pitt's inequality)
Let $u(x)=\|x\|^{\alpha}$, $v(x)=\|x\|^{\beta}$,
$x\in\mathbb{R}^{d}$ with $\alpha<0$ and $\beta>0$. Using (2.1) and
(2.2), we have for $s\geq0$
\begin{eqnarray*}D_{u}(s)&=&\nu_{k}\Big(\{x\in\mathbb{R}^{d}\,:\;\|x\|^{\alpha}>s\}\Big)\\
&=&\nu_{k}\Big(B(0,s^{\frac{1}{\alpha}})\Big)=
\frac{d_{k}}{2\gamma+d}\;s^{\frac{2\gamma+d}{\alpha}},\end{eqnarray*}
which gives for $t\geq0$
\begin{eqnarray*}u^{*}(t)=inf\{s\geq 0\,:\; D_{u}(s)\leq t\}=
\Big(\frac{2\gamma+d}{d_{k}}\Big)^{\frac{\alpha}{2\gamma+d}}\;t^{\frac{\alpha}{2\gamma+d}}.\end{eqnarray*}
  On the other hand, Using (2.1)
and (2.2) again, we
have for $s\geq0$, \begin{eqnarray*}D_{\frac{1}{\vartheta}}(s)&=&\nu_{k}\Big(\{x\in\mathbb{R}^{d}\,:\;\|x\|^{-\beta}>s\}\Big)\\
&=&\nu_{k}\Big(B(0,s^{-\frac{1}{\beta}})\Big)=\frac{d_{k}}{2\gamma+d}\;s^{-\frac{2\gamma+d}{\beta}},\end{eqnarray*}
which gives for $t\geq0$,
\begin{eqnarray*}(\frac{1}{\vartheta})^{*}(t)=inf\{s\geq 0\,:\; D_{\frac{1}{\vartheta}}(s)\leq t\}
=\Big(\frac{2\gamma+d}{d_{k}}\Big)^{-\frac{\beta}{2\gamma+d}}\;t^{-\frac{\beta}{2\gamma+d}}.\end{eqnarray*}
For these weights and $1<p\leq 2\leq q<+\infty$, the hypothesis of
Theorem 3.1, gives  respectively that the integrals in the
$B_p$-inequality (1.1) for
 $\displaystyle\frac{1}{\Big(\frac{1}{v}\Big)^{*}}$ are finite and the boundedness condition
 (3.13) is valid
 if and only
 if
$$0<\beta<(2\gamma+d)(p-1)\quad\mbox{and} \quad\left\{\begin{array}{lll}-(2\gamma+d)<\alpha<0,\\
&&\\\frac{1}{2\gamma+d}(\frac{\alpha}{q}+\frac{\beta}{p})=1-\frac{1}{p}-\frac{1}{q}\,.\end{array}\right.$$
Under these conditions and index constraints, we obtain from Theorem
3.1 and for $f\in L^p_{k,v}(\mathbb{R}^d)$, Pitt's inequality
\begin{eqnarray*}\Big(\int_{\mathbb{R}^{d}}\|x\|^{\alpha}|\mathcal{F}_{k}(f)(x)|^{q}
d\nu_{k}(x)\Big)^{\frac{1}{q}} \leq
c\,\Big(\int_{\mathbb{R}^{d}}\|x\|^{\beta}|f(x)|^{p}
d\nu_{k}(x)\Big)^{\frac{1}{p}}.\end{eqnarray*}In particular for
$p=q=2$ and $0<\beta<2\gamma+d$, we get
\begin{eqnarray*}\Big(\int_{\mathbb{R}^{d}}\|x\|^{-\beta}|\mathcal{F}_{k}(f)(x)|^{2}
d\nu_{k}(x)\Big)^{\frac{1}{2}} \leq
c\,\Big(\int_{\mathbb{R}^{d}}\|x\|^{\beta}|f(x)|^{2}
d\nu_{k}(x)\Big)^{\frac{1}{2}}.\end{eqnarray*}In the classical
Fourier analysis, this inequality plays an important role for which
some uncertainty principles hold. One of them is the Beckner's
logarithmic uncertainty principle (see [3]).
\end{application}\begin{remark} The limiting case
$\beta=0$, $\alpha=(2\gamma+d)(p-2)$ and $1<p=q\leq2$ was obtained
in ([1], Section 4, Lemma 1) and gives the Hardy-Littlewood-Paley
inequality
\begin{eqnarray*}\Big(\int_{\mathbb{R}^{d}}\|x\|^{(2\gamma+d)(p-2)}|\mathcal{F}_{k}(f)(x)|^{p}
d\nu_{k}(x)\Big)^{\frac{1}{p}} \leq
c\,\Big(\int_{\mathbb{R}^{d}}|f(x)|^{p}
d\nu_{k}(x)\Big)^{\frac{1}{p}}.\end{eqnarray*}
\end{remark}

\end{document}